
\documentclass{amsart}
\usepackage{mathrsfs}
\usepackage{amsmath,amssymb}
\usepackage{hyperref}

\theoremstyle{plain}
\newtheorem{theorem}{Theorem}[section]
\newtheorem{lemma}{Lemma}[section]

\newtheorem{corollary}{Corollary}[section]

\theoremstyle{definition}

\newtheorem{example}{Example}[section]

\theoremstyle{remark}

\newtheorem{question}{Question}[section]

\begin{document}

\title[About remainders in compactifications of paratopological groups]
{About remainders in compactifications of paratopological groups}

\author{Fucai Lin}
\address{Fucai Lin(corresponding author): Department of Mathematics and Information Science,
Zhangzhou Normal University, Zhangzhou 363000, P. R. China}
\address{Fucai Lin: Department of Mathematics, Sichuan University, Chengdou, 610064, P.R.China}%
\email{linfucai2008@yahoo.com.cn}

\author{Shou Lin}
\address{Shou Lin: Institute of Mathematics, Ningde Teachers' College, Fujian\, 352100, P. R. China}%
\email{linshou@public.ndptt.fj.cn}

\thanks{Supported by the NSFC (No. 10971185) and the Educational Department of Fujian Province (No. JA09166) of China.}

\keywords{Remainders in compactifications; paratopological groups;
topological groups; homogeneous spaces; Baire spaces; meager spaces;
Lindel\"{o}f; $k$-gentle.}
\subjclass[2000]{54A25; 54B05; 54D35; 54D40}

\begin{abstract}
In this paper, we prove a dichotomy theorem for remainders in
compactifications of paratopological groups: every remainder of a
paratopological group $G$ is either Lindel\"{o}f and meager or
Baire. Moreover, we give a negative answer for a question posed by
D. Basile and A. Bella in \cite{B1}, and some questions about
remainders of paratopological groups are posed in the paper.
\end{abstract}

\maketitle

\section{Introduction}
By a remainder of a space $X$ we understand the subspace
$bX\setminus X$ of a Hausdorff compactification $bX$ of $X$.
Remainders in compactifications of topological spaces have been
studied by some topologists in the last few years. A famous
classical result in this study is the following theorem of M.
Henriksen and J. Isbell \cite{H1958}:

\medskip
{\bf (M. Henriksen and J. Isbell)} A space $X$ is of countable type
if and only if the remiander in any (in some) compactification of
$X$ is Lindel\"{o}f.

Since topological groups are much more sensitive to the properties
of their remainders than topological spaces in general, topologists
are mainly interesting in the remainders of topological groups or
paratopological groups. For instance, Arhangel'ski\v{\i} has
recently proved the following two dichotomy theorems about
remainders in compactifications of topological groups:

\begin{theorem}\label{l0}\cite{A3}
If $G$ is a topological group, and some remainder of $G$ is not
pseudocompact, then every remainder of $G$ is Lindel\"{o}f.
\end{theorem}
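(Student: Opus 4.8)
The plan is to recast the conclusion by means of the Henriksen--Isbell theorem quoted above and then to exploit the homogeneity of $G$. A space is of countable type exactly when its remainder in one (equivalently, in every) compactification is Lindel\"of; so ``every remainder of $G$ is Lindel\"of'' is equivalent to ``$G$ is of countable type'', and it suffices to prove the single implication: if some remainder $Y=bG\setminus G$ of $G$ is not pseudocompact, then $G$ is of countable type. Since $G$ is homogeneous, the presence of just one non-empty compact subset of $G$ with a countable base of neighbourhoods already makes $G$ a topological group of pointwise countable type, hence feathered, hence a paracompact $p$-space, hence of countable type. So it is enough to produce in $G$ a single non-empty compact set of countable character.

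To manufacture such a set I would use the failure of pseudocompactness of $Y$ directly. Non-pseudocompactness of $Y$ yields an infinite family $\{V_n:n\in\omega\}$ of non-empty open subsets of $Y$ that is discrete in $Y$; picking $y_n\in V_n$ gives an infinite closed discrete subspace $D=\{y_n:n\in\omega\}$ of $Y$. Because $D$ is discrete in $Y$, its closure in the compact space $bG$ meets $Y$ in nothing but $D$, so $P:=\overline{D}^{\,bG}\cap G$ is a non-empty compact subset of $G$ --- non-empty because otherwise $D=\overline{D}^{\,bG}$ would be a closed discrete, hence finite, subset of the compact space $bG$. A short compactness argument, using that no $y_n$ is a cluster point of $D$ in $Y$, shows that every open subset of $bG$ containing $P$ contains all but finitely many of the $y_n$; equivalently, the decreasing compacta $K_m:=P\cup\{y_n:n\ge m\}$ satisfy $\bigcap_m K_m=P$, and every neighbourhood of $P$ in $bG$ contains some $K_m$. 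Thus the sequence $(y_n)$, which lives inside the remainder, ``converges'' to the compactum $P\subseteq G$.

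The remaining step --- promoting this convergent compactum of the remainder to a genuine countable base of \emph{open} neighbourhoods of a compact subset of $G$ --- is the heart of the matter, and I expect it to be the main obstacle. Here the group operations, not merely the homogeneity of $G$, must be used: for an arbitrary space the existence of a non-pseudocompact remainder does \emph{not} force countable type, so the argument genuinely needs that $G$ is a topological group. A natural approach is to replace the merely closed sets $K_m$ by honest open neighbourhoods, symmetrising with the multiplication and inversion of $G$ --- passing to sets of the form $O\cap O^{-1}$, to small translates, and repeatedly invoking the joint continuity of multiplication --- so that the resulting countable family really is a neighbourhood base of a suitable compact subset of $G$; the non-pseudocompactness of $Y$ is precisely what keeps this construction from escaping to infinity. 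Once $G$ has been shown to be of countable type, one last application of the Henriksen--Isbell theorem yields that every remainder of $G$ is Lindel\"of, and the proof is complete.
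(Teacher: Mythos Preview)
First, a framing observation: in the paper this theorem is not proved at all --- it is quoted, with a citation to \cite{A3}, as a known result of Arhangel'ski\v{\i}. So there is no ``paper's own proof'' to compare against; I can only assess your argument on its merits.

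Your reduction is sound: by Henriksen--Isbell it suffices to show $G$ is of countable type, and for a topological group it is enough to exhibit one non-empty compact set of countable character. The construction of the compact set $P=\overline{D}^{\,bG}\cap G$ from a closed discrete $D=\{y_n\}\subset Y$ is also correct, as is the claim that the compacta $K_m=P\cup\{y_n:n\ge m\}$ form a decreasing network at $P$ in $bG$ (every open $U\supseteq P$ in $bG$ contains some $K_m$).

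The gap you flag, however, is real and is not bridged by the hand-waving about ``symmetrising with multiplication and inversion''. Observe that $K_m\cap G=P$ for every $m$, since the $y_n$ all lie in $Y$. Hence, when you pass from $bG$ to $G$, the entire countable network collapses to the single set $P$, and you obtain no information whatsoever about open neighbourhoods of $P$ in $G$. In $bG$ the situation is no better: $G$ is nowhere locally compact, so $Y$ is dense and every $K_m$ has empty interior in $bG$; there is simply no open set $O$ with $P\subseteq O\subseteq K_m$. Thus the $K_m$ cannot be ``thickened'' into an open base for $P$ in either space. The bare fact that $P$ is the intersection of a decreasing sequence of compacta that trap all neighbourhoods is, by itself, compatible with $P$ having uncountable character --- think of any compact $P$ of uncountable character with $K_m=P$.

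Arhangel'ski\v{\i}'s argument in \cite{A3} does not try to squeeze a neighbourhood base out of the tails of a discrete sequence; instead, non-pseudocompactness of $Y$ is used to produce a non-empty compact $G_\delta$ subset of $bG$ lying inside $G$ (equivalently, a non-empty \v{C}ech-complete subspace of $G$), from which the compact set of countable character drops out immediately --- compare the proof of Theorem~\ref{t0} in the present paper, which extracts exactly this kind of \v{C}ech-complete subspace from a countable family of open sets and then invokes Lemma~\ref{l3}. Your proposal identifies the right target but stops short of the mechanism that actually reaches it.
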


\begin{theorem}\label{l2}\cite{A2}
Suppose that $G$ is a non-locally compact topological group. Then
either every remainder of $G$ has the Baire property, or every
remainder of $G$ is $\sigma$-compact.
\end{theorem}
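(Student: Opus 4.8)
To prove Theorem~\ref{l2} the plan is to reduce, in stages, to the metrizable case and then settle it directly. Since a topological group is either locally compact or nowhere locally compact, the hypothesis forces $G$ to be nowhere locally compact, so every remainder $Y=bG\setminus G$ is dense in $bG$ and has empty interior there. Two remarks remove the dependence on the particular compactification: $Y$ is $\sigma$-compact precisely when $G$ is a $G_{\delta}$-subset of $bG$, that is, when $G$ is \v{C}ech-complete, which is an intrinsic property of $G$; and whether a remainder is Baire is likewise independent of $bG$ (a known invariance lemma for remainders). Hence it is enough to produce, for one fixed compactification, a remainder that is Baire or $\sigma$-compact, and it is convenient to take $bG=\beta G$, in which every left translation of $G$, being a self-homeomorphism of $G$, extends to a homeomorphism of $\beta G$ fixing $G$ and $Y$ setwise.

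I would first dispose of the pseudocompact case. A pseudocompact Tychonoff space is regular and feebly compact, so every decreasing sequence of nonempty open sets has nonempty intersection of closures; the standard shrinking construction then yields the Baire property. Thus if $Y$ is pseudocompact, then $Y$ --- and hence every remainder --- is Baire. If $Y$ is not pseudocompact, Theorem~\ref{l0} gives that every remainder of $G$ is Lindel\"{o}f, and this is the essential case.

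So suppose $Y$ is Lindel\"{o}f. By the Henriksen--Isbell theorem this means that $G$ is of countable type, equivalently (for topological groups) that $G$ is feathered: there is a compact subgroup $K\le G$ with $G/K$ metrizable and the quotient map $q\colon G\to G/K$ perfect. Passing to the perfect map that $q$ induces between the corresponding remainders --- and using that $\sigma$-compactness and the Baire property are preserved and reflected by perfect maps --- I reduce the problem to a non-locally-compact \emph{metrizable} group, still denoted $G$. If $G$ is completely metrizable it is \v{C}ech-complete and $Y$ is $\sigma$-compact; otherwise I embed $G$ densely in its completion $\widehat{G}$, a completely metrizable group of which $G$ is a dense proper subgroup. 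A compactification $b\widehat{G}$ of $\widehat{G}$ also compactifies $G$, and $Y=(b\widehat{G}\setminus\widehat{G})\cup(\widehat{G}\setminus G)$; the first summand is the remainder of the \v{C}ech-complete group $\widehat{G}$, hence $\sigma$-compact and so meager in $b\widehat{G}$. For the second summand, Pettis's theorem says a subgroup with the Baire property is open or meager; as $G$ is dense and proper it cannot be open, so if $G$ has the Baire property in $\widehat{G}$ it is meager there, whence $\widehat{G}\setminus G$ contains a dense $G_{\delta}$-subset of $b\widehat{G}$ and is Baire. Then $Y$ is the union of the dense Baire subspace $\widehat{G}\setminus G$ with a set meager in $Y$, so $Y$ is Baire. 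The one remaining possibility --- that $G$ lacks the Baire property in $\widehat{G}$ --- I would handle by a direct category argument on the dense subgroup $G$, showing that $\widehat{G}\setminus G$ is still large enough for $Y$ to be Baire.

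I expect the main obstacle to be this transition from a Lindel\"{o}f remainder to the feathered normal form, with the attendant bookkeeping of remainders under $q$, and then the delicate endgame for a metrizable $G$ lacking the Baire property in $\widehat{G}$. A structural point underlies the whole scheme: homogeneity (here through the feathered structure theorem, through Pettis's theorem, and, in Arhangel'ski\v{\i}'s original approach, through left translations acting on $\beta G$) is what propagates a local failure of the Baire property to a global conclusion; but a bare translation argument yields only that ``$Y$ is meager'', and sharpening this to ``$Y$ is $\sigma$-compact'' is precisely what forces the pseudocompact/Lindel\"{o}f split via Theorem~\ref{l0} and explains why topological groups admit the sharper dichotomy. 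Everything else should be routine category computations inside compact Hausdorff spaces.
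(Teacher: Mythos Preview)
The paper does not prove Theorem~\ref{l2} at all: it is quoted from Arhangel'ski\v{\i}~\cite{A2} and then used as a black box in the proof of Corollary~\ref{c0}. So there is no argument in the present paper to compare your proposal against.

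That said, your outline is a plausible route to Arhangel'ski\v{\i}'s result, but it contains a self-acknowledged gap. In the metrizable endgame you split according to whether $G$ has the property of Baire in its completion $\widehat{G}$, and for the last case you write only that you ``would handle [it] by a direct category argument'' without giving one. This case is not vacuous --- there are (at least consistently) dense non-meager subgroups of Polish groups that lack the property of Baire --- so it must be treated. In fact your own machinery closes it: if $G$ is non-meager in $\widehat{G}$ then $G$ is a Baire space, hence so is every coset $gG$; any coset with $g\notin G$ is dense in $\widehat{G}$ and lies inside $Y$, and a space containing a dense Baire subspace is itself Baire (the two-line verification is exactly the computation you implicitly use in the previous case). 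With this observation the Pettis detour becomes unnecessary: simply split into ``$G$ meager in $\widehat{G}$'' (then a dense $G_\delta$ of $\widehat{G}$ lies in $Y$) versus ``$G$ non-meager in $\widehat{G}$'' (then a coset does), and in both cases $Y$ inherits the Baire property from a dense subspace.

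A smaller point: in the reduction step you assert that ``$\sigma$-compactness and the Baire property are preserved and reflected by perfect maps.'' What you actually need is that the perfect surjection from the remainder of $G$ onto the remainder of $G/K$ reflects the Baire property. This is true, but it is not as routine as the $\sigma$-compact case and deserves either a reference or a short argument; simply declaring it leaves the reduction under-justified.
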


Moreover, D. Basile and A. Bella have just shown a dichotomy theorem for homogeneous spaces:

\begin{theorem}\cite{B1}\label{l6}
The remainder of a homogeneous space is either Baire or meager and realcompact.
\end{theorem}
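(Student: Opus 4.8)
The plan is to dispose of the locally compact case at once and then, in the non-locally-compact case, establish two things in turn: (a)~the remainder is Baire or meager; (b)~a meager remainder is realcompact. Let $bX$ be a compactification of the homogeneous space $X$, and put $Y=bX\setminus X$. If $X$ is locally compact then $X$ is open in $bX$, so $Y$ is closed in the compact space $bX$, hence compact, hence Baire, and we are in the first alternative. So assume $X$ is not locally compact. By homogeneity $X$ is then nowhere locally compact, and since the set of points at which $X$ is locally compact equals the interior of $X$ in $bX$, this interior is empty, i.e.\ $Y$ is dense in $bX$. Finally, since for remainders the properties of being Baire, being meager and being realcompact do not depend on the compactification chosen (a fact in the spirit of the Henriksen--Isbell theorem and available in the literature; it can also be seen via the canonical map $\beta X\to bX$ fixing $X$ pointwise), I would from now on take $bX=\beta X$, so that every autohomeomorphism of $X$ extends to an autohomeomorphism of $\beta X$ carrying $Y=\beta X\setminus X$ onto itself.

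For step (a), suppose $Y$ is not Baire; then some nonempty open set $U_{0}=W_{0}\cap Y$ of $Y$, with $W_{0}$ open in $\beta X$, is meager in $Y$, and I fix a point $x_{0}\in W_{0}\cap X$. The key claim is that every nonempty open subset of $Y$ contains a nonempty open subset that is meager in $Y$. To prove it, given a nonempty open $U_{1}=W_{1}\cap Y$ of $Y$ choose $x_{1}\in W_{1}\cap X$ and a homeomorphism $h$ of $X$ with $h(x_{0})=x_{1}$, and let $\widetilde{h}$ be the extension of $h$ to $\beta X$; then $\widetilde{h}$ restricts to a homeomorphism of $Y$ onto $Y$, the set $\widetilde{h}(W_{0})\cap W_{1}$ is a nonempty open neighbourhood of $x_{1}$ in $\beta X$, and hence $\widetilde{h}(W_{0})\cap W_{1}\cap Y$ is a nonempty open subset of $U_{1}$ contained in $\widetilde{h}(W_{0}\cap Y)=\widetilde{h}(U_{0})$, which is meager in $Y$. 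This gives the claim. Now let $V$ be the union of all open subsets of $Y$ that are meager in $Y$; by the claim $V$ is dense in $Y$, and by the Banach Category Theorem $V$ is itself meager in $Y$. Then $Y\setminus V$ is closed and has empty interior, hence is nowhere dense, hence meager, and $Y=V\cup(Y\setminus V)$ is meager.

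For step (b), let $Y$ be meager, say $Y=\bigcup_{n}F_{n}$ with each $F_{n}$ closed and nowhere dense in $Y$. Since $Y$ is dense in $\beta X$, each $K_{n}:=\mathrm{cl}_{\beta X}(F_{n})$ is compact and nowhere dense in $\beta X$; so $\beta X\setminus\bigcup_{n}K_{n}$ is a dense $G_{\delta}$ subset of $\beta X$ contained in $X$, while $Y$ is contained in the $\sigma$-compact set $\bigcup_{n}K_{n}$. I would then aim to show that each $F_{n}$ is realcompact and conclude by the standard fact that a space which is a countable union of closed realcompact subspaces is realcompact. For a single $F_{n}$ I would use $F_{n}=(L_{n}\cap Y)\cup(K_{n}\setminus L_{n})$, where $L_{n}:=\mathrm{cl}_{\beta X}(K_{n}\cap X)$: here $K_{n}\cap X$ is closed and nowhere dense in $X$, the set $L_{n}\cap Y$ is a remainder of the subspace $K_{n}\cap X$, and $K_{n}\setminus L_{n}$ is an open subset of $K_{n}$, hence locally compact, lying entirely in $Y$. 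The hard part will be exactly this last step: extracting from the bare hypothesis ``$Y$ is meager'' enough covering information about the traces of $X$ on the compacta $K_{n}$ to force the pieces $L_{n}\cap Y$ and $K_{n}\setminus L_{n}$ to be realcompact and then to reassemble them — this is where the Henriksen--Isbell circle of ideas, or a further appeal to homogeneity of $X$, has to be brought in. A secondary point to pin down is the compactification-independence invoked in the reduction to $bX=\beta X$.
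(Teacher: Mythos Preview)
The paper does not contain a proof of this theorem: it is merely quoted from \cite{B1} and then used as a black box in the proof of Theorem~\ref{t0}. So there is no ``paper's own proof'' to compare against; I can only comment on your attempt itself.

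Your step~(a) is essentially correct. The reduction to $bX=\beta X$, the use of extended autohomeomorphisms to propagate a meager open set, and the Banach Category Theorem to conclude that $Y$ is meager are all standard and sound (the compactification-independence you flag is routine).

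Step~(b), however, has a genuine gap, and the strategy you outline is unlikely to close it. You are trying to deduce realcompactness of $Y$ from the bare fact that $Y$ is meager, but this implication is false in general: your own construction gives $Y\subset\bigcup_n K_n$ with each $K_n$ compact, yet subspaces of $\sigma$-compact spaces need not be realcompact (e.g.\ $\omega_1\subset\omega_1+1$). Your further decomposition $F_n=(L_n\cap Y)\cup(K_n\setminus L_n)$ does not help, because you know nothing about the piece $K_n\cap X$ beyond it being closed and nowhere dense in $X$; there is no reason its remainder $L_n\cap Y$ should be realcompact. The point is that ``$Y$ is not Baire'' carries strictly more information than ``$Y$ is meager'': as in this paper's proof of Theorem~\ref{t0}, the failure of Baire yields a dense \v{C}ech-complete $G_\delta$ in some open subset of $X$, hence a nonempty compact set of countable character in $X$, and homogeneity then spreads this over all of $X$. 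It is this structural information about $X$ --- not the meagerness of $Y$ --- that drives the realcompactness conclusion via a Henriksen--Isbell-type argument. By first passing to ``$Y$ is meager'' and only then attempting to recover realcompactness, you have discarded exactly the information you need.
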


D. Basile and A. Bella posed the following question.

\begin{question}\cite{B1}\label{q0}
Let $X$ be a homogeneous space and let $bX$ be a compactification of
$X$. Is it true that the remainder $bX\setminus X$ is either
pseudocompact or realcompact and meager?
\end{question}

In \cite{B1}, D. Basile and A. Bella has shown that none of the
Arhangel'ski\v{\i}'s dichotomy theorems can be generalized to the
case of homogeneous spaces. In \cite{A1}, Arhangel'ski\v{\i} has
given an example to show that the Theorem~\ref{l0} can not be
generalized to the case of paratopological groups. Naturally, the
following two questions arise:

\begin{question}\label{q5}
How about the dichotomy theorem of the remainders in compactifications of paratopological groups?
\end{question}

\begin{question}\label{q6}
Can the dichotomy theorem~\ref{l2} be generalized to the case of paratopological groups?
\end{question}

In this paper, we show that, for a paratopological group $G$, every
remainder of $G$ is either Lindel\"{o}f and meager or Baire, which
give an answer for Question~\ref{q5}. Also, we give a partial answer
for the Question~\ref{q6}. Finally, we give a negative answer for
the Question~\ref{q0}. \maketitle

\section{Preliminaries}
Recall that a {\it topological group} $G$ is a group $G$ with a
topology such that the product map of $G \times G$ into $G$
associating $xy$ with arbitrary $(x, y)\in G\times G$ is jointly
continuous and the inverse map of $G$ onto itself associating
$x^{-1}$ with arbitrary $x\in G$ is continuous. A {\it
paratopological group} $G$ is a group $G$ with a topology such that
the product map of $G \times G$ into $G$ is jointly continuous. A
{\it semitopological group} $G$ is a group $G$ with a topology such
that the product map of $G\times G$ into $G$ is separately
continuous. A {\it quasitopological group} $G$ is a group $G$ with a
topology such that it is a semitopological group and the inverse map
of $G$ onto itself is continuous.

Recalled that a space is {\it Baire} if the intersection of a
sequence of open and dense subsets is dense. Moreover, a space is
called {\it meager} if it can be represented as the union of a
sequence of nowhere dense subsets.

Let us call a map $f$ of a space $X$ into a space $Y$ {\it
$k$-gentle} \cite{A1} if for every compact subset $F$ of $X$ the
image $f(F)$ is also compact. A semitopological group $G$ will be
called {\it $k$-gentle} \cite{A1} if the inverse map ($x\mapsto
x^{-1}, \forall x\in G$) is $k$-gentle.

A family $\mathcal{A}$ of open subsets of a space $X$ is called {\it
a base of $X$ at a set $A$} if $A=\cap\mathcal{A}$ and for any
neighborhood $U$ of $A$, there is a $V\in\mathcal{A}$ such that
$A\subset V\subset U$. If $\mathcal{A}$ is countable, then we say
that $A$ has countable character in $X$. A space $X$ is of {\it
countable type} \cite{E} if every compact subspace $F$ of $X$ is
contained in a compact subspace $K\subset X$ with a countable base
of open neighborhoods in $X$.

Throughout this paper, all spaces are assumed to be Tychonoff.
Denote positively natural number by $\mathbb{N}$. We refer the
reader to \cite{A2008, E} for notations and terminology not
explicitly given here.
\bigskip

\section{Remainders of paratopological groups}
Firstly, we give a lemma.

\begin{lemma}\label{l3}\cite{A1}
Let $G$ be a paratopological group. If there exists a non-empty
compact subset of $G$ of countable character in $G$, then $G$ is of
countable type.
\end{lemma}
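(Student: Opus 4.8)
The plan is to prove the stronger (and classical) statement that, under the hypothesis, \emph{every} non-empty compact subset of $G$ is contained in a compact subset of $G$ admitting a countable base of open neighbourhoods; this is precisely the assertion that $G$ is of countable type. The only ``group'' facts I intend to use are that in a paratopological group every translation $L_a\colon x\mapsto ax$ and $R_a\colon x\mapsto xa$ is a homeomorphism of $G$ — its inverse $L_{a^{-1}}$ (resp. $R_{a^{-1}}$) is continuous by joint continuity of the product — together with joint continuity of the product itself. Continuity of the inverse map is never invoked; that is what lets the argument pass from topological groups to paratopological groups.

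First I would normalise the hypothesis. Let $F$ be a non-empty compact subset of $G$ with countable character, and fix $b\in F$. Since $L_{b^{-1}}$ is a homeomorphism of $G$, the set $b^{-1}F$ is again a non-empty compact subset with countable character, and it contains the neutral element $e$. Hence, replacing $F$ by $b^{-1}F$, I may assume $e\in F$; fix a decreasing sequence $W_1\supseteq W_2\supseteq\cdots$ of open sets that form a base of open neighbourhoods of $F$ in $G$, so in particular $\bigcap_n W_n=F$.

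Now let $P\subseteq G$ be an arbitrary non-empty compact set and put $K:=PF=\{pf:p\in P,\ f\in F\}$. As the image of the compact space $P\times F$ under the jointly continuous product map, $K$ is compact, and $P=Pe\subseteq PF=K$ because $e\in F$. For each $n$ the set $PW_n=\bigcup_{p\in P}pW_n$ is open (each $pW_n=L_p(W_n)$ is open), and $K=PF\subseteq PW_n$ since $F\subseteq W_n$. I then claim that $\{PW_n:n\in\mathbb{N}\}$ is a base of open neighbourhoods of $K$ in $G$; granting this, $K$ witnesses that the arbitrary compact set $P$ lies in a compact set of countable character, so $G$ is of countable type.

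The remaining step, which I expect to be the main obstacle, is the claim. Suppose, for contradiction, that some open $U\supseteq K$ contains no $PW_n$; choose $x_n\in PW_n\setminus U$ and write $x_n=p_nw_n$ with $p_n\in P$ and $w_n\in W_n$. A compactness argument shows that $(w_n)$ has a cluster point $c\in F$: otherwise each $c\in F$ has an open neighbourhood $V_c$ with $w_n\notin V_c$ for large $n$, finitely many of these cover $F$ and hence contain some $W_m$, contradicting $w_n\in W_n\subseteq W_m$ for large $n$. Pick a free ultrafilter $\mathcal{U}$ on $\mathbb{N}$ refining the trace on $\mathbb{N}$ of the neighbourhood filter of $c$ (possible since $c$ is a cluster point); then $\lim_{\mathcal{U}}w_n=c$, and, $P$ being compact, $\lim_{\mathcal{U}}p_n=p$ for some $p\in P$. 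Joint continuity of the product gives $\lim_{\mathcal{U}}x_n=pc\in PF=K\subseteq U$, so $\{n:x_n\in U\}\in\mathcal{U}$ is non-empty, contradicting $x_n\notin U$ for all $n$. Finally, since $K$ is compact and hence closed, the base property just established forces $\bigcap_n PW_n=K$, so $\{PW_n\}$ is a base of $G$ at $K$ in the required sense. The delicate point is handling the ``$F$-coordinate'': the points $w_n$ roam through neighbourhoods of $F$ rather than through $F$ itself, so one must first pull them back into the compact set $F$ via the clustering argument before a single ultrafilter can combine both coordinates through joint continuity.
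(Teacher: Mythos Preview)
The paper does not give its own proof of this lemma; it is simply quoted from \cite{A1}. So there is no ``paper's proof'' to compare against, and your task was really to supply one. Your argument is correct. The normalisation $e\in F$ via a left translation, the candidate $K=PF$ with neighbourhood base $\{PW_n\}$, and the contradiction via an ultrafilter limit through joint continuity all go through exactly as you wrote them; at no point is continuity of inversion used, which is the whole point in the paratopological setting. The clustering argument for the $w_n$'s is the right way to cope with the fact that they lie only in shrinking neighbourhoods of $F$, and the final line recovering $\bigcap_n PW_n=K$ from regularity plus the base property is the clean way to meet the paper's definition of ``base at a set''. This is essentially the classical proof for topological groups, transplanted verbatim once one notes that translations remain homeomorphisms and that the product map is still jointly continuous.
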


Now, we give a dichotomy theorem of the remainders in
compactifications of paratopological groups.

\begin{theorem}\label{t0}
Let $G$ be a non-locally compact paratopological group. Then either
every remainder of $G$ has the Baire property, or every remainder of
$G$ is meager and Lindel\"{o}f.
\end{theorem}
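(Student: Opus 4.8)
The plan is to establish a dichotomy by splitting on whether the remainder is Baire. Suppose $G$ is a non-locally compact paratopological group and that some compactification $bG$ has a remainder $Y = bG \setminus G$ that does \emph{not} have the Baire property; I must then show that every remainder of $G$ is meager and Lindel\"of. The first step is the standard reduction: since any two remainders of the same space are related by a perfect map (indeed, for any two compactifications $b_1 G, b_2 G$ there is a continuous surjection between them restricting to a perfect map of the remainders), and since being Lindel\"of, being meager, and being Baire are all invariant under perfect maps in the appropriate directions, it suffices to work with a single remainder. In particular, $Y$ not Baire gives that $Y$ is meager on some nonempty open subset, and I want to propagate this to all of $Y$ and then deduce Lindel\"ofness.

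Next I would bring in homogeneity-type arguments. A paratopological group need not be homogeneous in the usual topological sense, but left translations are homeomorphisms of $G$, and this is typically enough: the group acts on $bG$ in a way that lets one move a "bad" (meager) open piece of the remainder around. More precisely, I expect to use that $G$ is dense in $bG$ and that the translations $\ell_g$ extend (at least in a controlled, possibly only separately continuous way) to relate neighborhoods in $bG$; combining this with Baire category reasoning on $G$ itself — which, being a nonempty open subset of $bG$, is either meager in $bG$ or somewhere Baire — one argues that if $Y$ fails Baire then $G$ cannot be "too large". The key structural input available to us is Lemma~\ref{l3} together with Lemma~\ref{l0}: if $G$ had a nonempty compact subset of countable character, $G$ would be of countable type, hence (by the Henriksen--Isbell theorem quoted in the introduction) every remainder would be Lindel\"of, and then one would separately have to rule out Baire to land in the meager case. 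So I would organize the proof around the trichotomy: $G$ locally compact (excluded by hypothesis), $G$ of countable type (remainders Lindel\"of, then check meagerness), or $G$ "nowhere of countable type," in which case I aim to show directly that the remainder is both Lindel\"of and meager.

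To get Lindel\"ofness of $Y$ when $Y$ is not Baire, I would try to show $Y$ (equivalently $G$) satisfies a covering condition forcing the remainder to be Lindel\"of — for instance, showing $bG$ has a certain countable type behavior along $G$, or showing $G$ is a paracompact $p$-space or has a $G_\delta$-diagonal-like structure, any of which pushes Lindel\"ofness onto the remainder via the Henriksen--Isbell circle of ideas. The meagerness part should then come almost for free: a dense-in-itself Lindel\"of remainder that is not Baire is meager (a Lindel\"of space that is Baire on no open set and is, say, without isolated points decomposes into countably many nowhere dense sets), and one checks the remainder of a non-locally-compact paratopological group has no isolated points. Then one closes the loop: meager + Lindel\"of is the second horn, and since these conclusions were transported across the perfect maps between remainders, every remainder is meager and Lindel\"of.

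The main obstacle I anticipate is the homogeneity/translation step: unlike a topological group, a paratopological group's inversion is not continuous, so the usual trick of sliding open sets of the remainder around with group elements (which in the topological-group case uses the joint continuity of $(x,y)\mapsto xy^{-1}$ and its extension to $bG$) is delicate. I expect this is exactly where the $k$-gentle notion introduced in the preliminaries enters, or where one must content oneself with only left translations and argue more carefully that left translations alone suffice to spread meagerness across the remainder. Controlling how translations extend to $bG$ — or, alternatively, avoiding extending them and instead arguing entirely inside $G$ about the trace of $Y$'s open cover — will be the technical heart of the argument; everything else is the Baire-category bookkeeping and the appeal to Lemmas~\ref{l0} and~\ref{l3}.
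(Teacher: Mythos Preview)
Your proposal misses the actual mechanism that converts ``$Y$ not Baire'' into ``$G$ of countable type,'' and the detour through extending translations to $bG$ is a red herring. The paper's argument is direct and uses no extension of the group action to the compactification: if $Y$ fails Baire, take dense open $U_n\subset Y$ with $\bigcap U_n$ not dense, extend each to an open $V_n\subset bG$, and choose a nonempty open $U\subset bG$ missing $\bigcap V_n\cap Y$. Then $Z=(\bigcap V_n)\cap U$ is a dense $G_\delta$ of $U$ lying entirely in $G$, hence \v{C}ech-complete; a \v{C}ech-complete space is of countable type, so $Z$ contains a compact $F$ of countable character in $Z$, and density pushes this up to countable character in $G$. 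Now Lemma~\ref{l3} gives $G$ of countable type, and Henriksen--Isbell gives every remainder Lindel\"of. You gesture at Lemma~\ref{l3} and Henriksen--Isbell, but you never explain how to produce the compact set of countable character; your ``trichotomy'' and the talk of paracompact $p$-spaces or $G_\delta$-diagonals do not supply it.

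Two further issues. First, paratopological groups \emph{are} homogeneous in the usual sense: left translations are homeomorphisms, so your worry on that point is misplaced. Second, your meagerness step is not right as stated: ``not Baire'' only gives a nonempty meager open set, not that $Y$ is ``Baire on no open set,'' and Lindel\"of plus dense-in-itself does not repair this. The paper simply invokes Theorem~\ref{l6} (the Basile--Bella dichotomy for homogeneous spaces), which applies because $G$ is homogeneous; that is where the homogeneity is used, and it requires no analysis of how translations interact with $bG$. The $k$-gentle hypothesis plays no role in Theorem~\ref{t0}; it enters only later, in Corollary~\ref{c0}.
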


\begin{proof}
Suppose that $bG$ is a compactification of $G$ such that the
remainder $Y=bG\setminus G$ does not have the Baire property. Next,
we shall prove that $Y$ is Lindel\"{o}f and meager.

Since $Y$ does not have the Baire property, there exists a countable
family $\{U_{n}: n\in \mathbb{N}\}$ of open subsets of $Y$ such that
$\cap\{U_{n}: n\in \mathbb{N}\}$ is not dense in $Y$. Because $G$ is
nowhere locally compact, $Y$ is dense in $bG$. For each $n\in
\mathbb{N}$, there exists an open subset $V_{n}$ of $bG$ such that
$U_{n}=V_{n}\cap Y$. Let $\gamma =\{V_{n}: n\in \mathbb{N}\}$.
Therefore, we can find a non-empty open subset $U$ of $bG$ such that
$(\cap\gamma )\cap (U\cap Y)=\emptyset$. It follows that the
subspace $Z=(\cap\gamma )\cap (U\cap G)=(\cap\gamma )\cap U$ is
\v{C}ech-complete in $U\cap G$ of $G$ by \cite[Theorem 3.9.6]{E}. It
is known that every \v{C}ech-complete space is of countable type.
Since $Z$ is \v{C}ech-complete, there exists a non-empty compact
subset $F$ of $Z$ of countable character in $Z$. Because $Z$ is
dense in the open subspace $U\cap G$ of $G$, $F$ is of countable
character in $U\cap G$ \cite{A2008}. Because $U\cap G$ is open in
$G$, $F$ is of countable character in $G$. Obviously, $F$ is compact
in $G$. Therefore, $G$ is of countable type by Lemma~\ref{l3}.
Therefore, $Y$ is Lindel\"{o}f by M. Henriksen and J. Isbell
theorem. Moreover, $Y$ is meager by Theorem~\ref{l6}. This complete
the proof.
\end{proof}

{\bf Remark}\, Observe that a remainder $Y$ of a non-locally compact
paratopological group $G$ cannot have the Baire property, be
Lindel\"{o}f and meager at the same time. Indeed, it is easy to see
that the failure of the Baire property is equivalent to the
existence of some non-empty open meager subset. Thus we have the
following two corollaries.

\begin{corollary}
Let $X$ be a neither Baire nor meager space. Then $X$ cannot be a
remainder in compactifications of any paratopological group.
\end{corollary}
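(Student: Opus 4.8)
The plan is to argue by contradiction, splitting on whether the hypothetical group is locally compact and invoking Theorem~\ref{t0} in the non-locally-compact case. So suppose, toward a contradiction, that $X$ is homeomorphic to the remainder $bG\setminus G$ for some paratopological group $G$ and some Hausdorff compactification $bG$ of $G$.

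First I would dispose of the case in which $G$ is locally compact. A locally compact Tychonoff space is open in every Hausdorff compactification, so $G$ is open in $bG$, and therefore $X=bG\setminus G$ is closed in the compact space $bG$; hence $X$ is compact. But every compact Hausdorff space is Baire, contradicting the assumption that $X$ is not Baire.

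Next I would treat the case in which $G$ is not locally compact. Here Theorem~\ref{t0} applies directly to $G$ and yields two alternatives: either every remainder of $G$ has the Baire property, or every remainder of $G$ is meager and Lindel\"{o}f. In the first alternative $X$ would be Baire, and in the second $X$ would be meager; both possibilities contradict the hypothesis that $X$ is neither Baire nor meager. Since both cases are impossible, no paratopological group can have $X$ as a remainder in any compactification, which is the claim.

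I do not expect any genuine obstacle here: the only mildly delicate point is the locally compact case, and it rests solely on the standard facts that a locally compact space is open in each of its Hausdorff compactifications and that compact Hausdorff spaces satisfy the Baire category theorem. Everything else is an immediate reading of the dichotomy in Theorem~\ref{t0}.
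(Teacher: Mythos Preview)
Your proof is correct and follows the paper's intended route: the paper does not write out a separate proof but simply records the corollary as an immediate consequence of Theorem~\ref{t0} (via the preceding Remark). Your explicit treatment of the locally compact case---where the remainder is compact and hence Baire---is a detail the paper leaves implicit, so your argument is, if anything, slightly more complete.
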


\begin{corollary}
Let $X$ be a neither Baire nor Lindel\"{o}f space. Then $X$ cannot
be a remainder in compactifications of any paratopological group.
\end{corollary}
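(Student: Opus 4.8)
The plan is to reduce the theorem to a single structural statement about $G$: if \emph{some} remainder $Y=bG\setminus G$ fails to have the Baire property, then $G$ is of countable type. Granting this, the Henriksen--Isbell theorem makes every remainder of $G$ Lindel\"{o}f, and Theorem~\ref{l6} (which applies because $G$, being a paratopological group, is homogeneous: left translations are homeomorphisms) makes every remainder either Baire or meager; since the Baire property of a remainder does not depend on the chosen compactification --- one transports non-empty open meager sets back and forth along the perfect maps that connect two compactifications of $G$ --- no remainder is Baire, so every remainder is meager and Lindel\"{o}f, as required. A preliminary remark I would record first is that homogeneity forces a non-locally compact $G$ to be nowhere locally compact, hence $G$ has empty interior in every $bG$, and therefore each remainder $Y$ is dense in $bG$.

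To produce countable type from the failure of the Baire property in $Y$, I would fix open dense subsets $U_n$ of $Y$ whose intersection is not dense in $Y$, lift each to an open set $V_n$ of $bG$ with $V_n\cap Y=U_n$ (so each $V_n$ is open and dense in $bG$), and then pick a non-empty open $U\subseteq bG$ with $\bigl(\bigcap_n V_n\bigr)\cap U\cap Y=\emptyset$. The set $Z:=\bigl(\bigcap_n V_n\bigr)\cap U$ is then contained in $G$; moreover $U$, being open in the compact space $bG$, is locally compact and hence \v{C}ech-complete, and $Z$ is a $G_\delta$-subset of $U$, so $Z$ is \v{C}ech-complete by \cite[Theorem 3.9.6]{E}. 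Since $bG$ is a Baire space, $\bigcap_n V_n$ is dense in $bG$, so $Z$ is dense in the open subspace $U\cap G$ of $G$.

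Now I would use that every \v{C}ech-complete space is of countable type to obtain a non-empty compact $F\subseteq Z$ with countable character in $Z$. Because $Z$ is dense in $U\cap G$, $F$ retains countable character in $U\cap G$ (a standard fact, cf.\ \cite{A2008}); and because $U\cap G$ is open in $G$, $F$ has countable character in $G$ as well. As $F$ is also compact in $G$, Lemma~\ref{l3} yields that $G$ is of countable type, completing the reduction and hence, by the first paragraph, the theorem.

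The step I expect to be the main obstacle is the second paragraph: converting a bare failure of the Baire property of $Y$ into a \v{C}ech-complete subset of $G$ that is at the same time dense in an open subset of $G$ --- in particular, the correct choice of $U$ ensuring simultaneously $Z\subseteq G$ and the density of $Z$ in $U\cap G$ --- together with the careful (though routine) transfer of ``countable character'' first across the dense inclusion $Z\subseteq U\cap G$ and then across the open inclusion $U\cap G\subseteq G$. By contrast, everything downstream of ``$G$ is of countable type'' is a direct appeal to the Henriksen--Isbell theorem and Theorem~\ref{l6}.
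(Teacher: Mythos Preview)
Your write-up is a correct and faithful reproduction of the paper's proof of Theorem~\ref{t0} (the dichotomy: every remainder of a non-locally compact paratopological group is either Baire, or Lindel\"{o}f and meager) --- the construction of the \v{C}ech-complete dense $Z\subseteq G$, the transfer of countable character and the appeal to Lemma~\ref{l3}, and the invocations of Henriksen--Isbell and Theorem~\ref{l6} all match the paper line for line, with a couple of welcome extra justifications (density of the $V_n$ in $bG$, and why the Baire property of a remainder is independent of the chosen compactification). But the statement at hand is the \emph{Corollary}, which in the paper carries no separate proof: it is recorded, after the Remark following Theorem~\ref{t0}, as an immediate consequence of that theorem. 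So you have re-derived the theorem the paper already supplies rather than simply invoked it.

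Even on its own terms, your version is missing two small closing steps. First, you never actually finish: after establishing the dichotomy you should observe that a space $X$ which is neither Baire nor Lindel\"{o}f lands in neither alternative, and hence cannot occur as a remainder. Second, the Corollary is stated for an arbitrary paratopological group $G$, whereas your argument (like Theorem~\ref{t0}) assumes $G$ is non-locally compact; you should dispose of the locally compact case first --- trivially, since then the remainder is closed in $bG$, hence compact, hence Lindel\"{o}f, contradicting the hypothesis on $X$.
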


{\bf Remark}\, D. Basile and A. Bella has shown that there exists a
homogeneous space such that the remainder of some compactification
is neither Baire nor Lindel\"{o}f, see \cite[Example 3.3]{B1}. Hence
Theorem~\ref{t0} can not be generalized to the case of homogeneous
spaces. However, we have the following question.

\begin{question}\label{q1}
Let $X$ be a non-locally compact semitopological group or
quasitopological group, and let $bX$ be a compactification of $X$.
Is it true that the remainder $bX\setminus X$ has the Baire property
or is Lindel\"{o}f and meager?
\end{question}

Next, we obtain two corollaries from Theorem~\ref{t0}. Firstly, we
show that the Arhangel'ski\v{\i}'s dichotomy Theorems~\ref{l2} can
be generalized to the case of $k$-gentle paratopological groups,
which give a partial answer for Question~\ref{q6}.

\begin{lemma}\label{l1}\cite{A1}
Let $G$ be a $k$-gentle paratopological group such that some
remainder of $G$ is Lindel\"{o}f. Then $G$ is a topological group.
\end{lemma}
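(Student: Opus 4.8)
The plan is to reduce the statement to a fact about spaces of countable type and then use $k$-gentleness to force the inversion map to be continuous. First, since some remainder of $G$ is Lindel\"{o}f, the Henriksen--Isbell theorem recalled in the Introduction shows that $G$ is of countable type. If $G$ is locally compact, then it is a locally compact Hausdorff semitopological group, so by the classical theorem of Ellis it is already a topological group and there is nothing to prove; hence I may assume that $G$ is nowhere locally compact and of countable type.

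Next I would fix a convenient compact set. Since $G$ is of countable type it contains a non-empty compact set with a countable base of open neighbourhoods, and translating it by a suitable element (left translations are homeomorphisms of a paratopological group) I obtain a compact $K$ with $e\in K$ admitting a decreasing countable base $\{W_n:n\in\mathbb{N}\}$ of open neighbourhoods. Because the inversion $\iota\colon x\mapsto x^{-1}$ is $k$-gentle, $K^{-1}$ is compact; moreover $KK^{-1}$ and $K^{-1}K$ are compact (continuous images of $K\times K^{-1}$ and $K^{-1}\times K$ under multiplication), symmetric, and contain $e$. I would also record the standard facts that the topology $\tau^{-1}$ generated by $\{U^{-1}:U\in\tau\}$ again makes $G$ a paratopological group, that $\iota$ is a homeomorphism of $(G,\tau)$ onto $(G,\tau^{-1})$, and hence that $(G,\tau^{-1})$ is of countable type as well, with $K^{-1}$ of countable character in it.

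The core of the proof is to show $\tau=\tau^{-1}$, equivalently that $\iota$ is continuous at $e$. The strategy I would pursue is to manufacture, out of $K$ and its inverse, a symmetric compact set $L\ni e$ which has countable character in $(G,\tau)$ itself: using joint continuity of multiplication one chooses open sets $V_n\ni e$ with $V_nKV_n\subseteq W_n$ and tracks how the families $\{W_n\}$ and $\{V_n\}$ behave under inversion, the aim being that a set such as $KK^{-1}$ (or $\bigcap_n\overline{W_n}$) acquires a countable $\tau$-base of neighbourhoods. Once such an $L$ is available, one argues as in Lemma~\ref{l3}: a symmetric compact set of countable character, together with $G$ being of countable type, lets one compare a neighbourhood of $e$ with its inverse and conclude that every $\tau$-open neighbourhood of $e$ is a $\tau^{-1}$-neighbourhood of $e$. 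Running the symmetric argument inside $(G,\tau^{-1})$, which we arranged to be of countable type too, yields $\tau=\tau^{-1}$, so $G$ is a topological group; continuity of $\iota$ at $e$ then upgrades to continuity everywhere by homogeneity.

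The step I expect to be the genuine obstacle is precisely the construction of $L$, i.e.\ passing from ``some compact set has countable character'' to ``inversion is continuous'': inversion need not be an open map, so a countable neighbourhood base at $K$ does not transfer to one at $K^{-1}$ in the topology $\tau$, and one must build a symmetric compact set that has countable character in $(G,\tau)$ by carefully interleaving the $k$-gentleness of $\iota$ with the joint continuity of multiplication. The remaining ingredients --- the reduction through Henriksen--Isbell, the locally compact case via Ellis, the application of Lemma~\ref{l3}, and the final homogeneity argument --- are routine.
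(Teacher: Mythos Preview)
The paper does not supply a proof of this lemma; it is quoted from \cite{A1} and used as a black box to derive Corollary~\ref{c0}. So there is no ``paper's own proof'' to compare against, and what you have written is an attempt to reconstruct the argument behind the citation.

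Your reduction is sound as far as it goes: Henriksen--Isbell correctly yields that $G$ is of countable type, the locally compact case is handled by Ellis, and you are right that $k$-gentleness is what should let compactness information cross the inversion map. But the proposal is not a proof, and you say so yourself. The passage from ``$G$ is of countable type and $k$-gentle'' to ``inversion is continuous at $e$'' is left as a plan, and the sentence ``a symmetric compact set of countable character, together with $G$ being of countable type, lets one compare a neighbourhood of $e$ with its inverse and conclude that every $\tau$-open neighbourhood of $e$ is a $\tau^{-1}$-neighbourhood of $e$'' is precisely the content of the lemma, asserted rather than argued. Note too that Lemma~\ref{l3} as stated only concludes that $G$ is of countable type --- something you already have from Henriksen--Isbell --- so invoking it again in your final step gives no new leverage on inversion; the appeal to it there does no work.

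In short: the routine scaffolding is correct, but the core step --- the one you yourself flag as the ``genuine obstacle'' --- is absent, and without it this is an outline rather than a proof. If your goal is simply to \emph{use} the lemma, citing \cite{A1} as the present paper does is the honest route; if your goal is to reprove it, the argument in \cite{A1} must actually be carried out, since nothing in the present paper supplies it.
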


\begin{corollary}\label{c0}
Let $G$ be a non-locally compact $k$-gentle paratopological group.
Then either every remainder of $G$ has the Baire property, or every
remainder of $G$ is $\sigma$-compact.
\end{corollary}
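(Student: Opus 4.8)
The plan is to chain together Theorem~\ref{t0}, Lemma~\ref{l1}, and Arhangel'ski\v{\i}'s dichotomy Theorem~\ref{l2}. First, apply Theorem~\ref{t0} to the non-locally compact paratopological group $G$: either every remainder of $G$ has the Baire property, in which case the first alternative of the corollary already holds and nothing more need be proved, or every remainder of $G$ is meager and Lindel\"{o}f. So from now on I assume we are in the latter case and fix an arbitrary compactification $bG$ with remainder $Y=bG\setminus G$.

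Since $G$ is non-locally compact it is in particular not compact, so $Y$ is non-empty, and by assumption $Y$ is Lindel\"{o}f. Thus $G$ is a $k$-gentle paratopological group admitting a Lindel\"{o}f remainder, and Lemma~\ref{l1} forces $G$ to be a topological group. It remains non-locally compact, so Arhangel'ski\v{\i}'s Theorem~\ref{l2} applies to $G$ and yields the dichotomy: either every remainder of $G$ has the Baire property, or every remainder of $G$ is $\sigma$-compact.

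To conclude, I rule out the first of these two possibilities. By the case assumption $Y$ is meager and, as noted above, non-empty; since $Y$ is open in itself and meager, $Y$ contains a non-empty open meager subset, so $Y$ does not have the Baire property (this is precisely the observation made in the Remark following Theorem~\ref{t0}). Hence the Baire alternative of Theorem~\ref{l2} cannot occur, and therefore every remainder of $G$ is $\sigma$-compact, which is the second alternative in the statement. The only subtlety, and the point deserving care, is confirming that the ``meager and Lindel\"{o}f'' output of Theorem~\ref{t0} is genuinely incompatible with the Baire property, so that the two dichotomies can be spliced without an overlapping case; this incompatibility is exactly what the Remark records, so the argument goes through.
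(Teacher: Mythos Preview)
Your proof is correct and follows essentially the same route as the paper: apply Theorem~\ref{t0}, use Lemma~\ref{l1} to upgrade $G$ to a topological group in the non-Baire case, and then invoke Theorem~\ref{l2}. The only cosmetic difference is that the paper assumes directly that the remainder $Y$ fails the Baire property (so the Baire alternative of Theorem~\ref{l2} is excluded by hypothesis), whereas you assume the ``meager and Lindel\"{o}f'' branch of Theorem~\ref{t0} and then argue separately that a non-empty meager space cannot be Baire; both organizations are fine.
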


\begin{proof}
Suppose that $bG$ is a compactification of $G$, and put
$Y=bG\setminus G$. By Theorem~\ref{t0}, $Y$ has the Baire property,
or is meager and Lindel\"{o}f. Suppose that $Y$ does not have the
Baire property. Then $Y$ is Lindel\"{o}f, and hence $G$ is a
topological group by Lemma~\ref{l1}. Then $Y$ is $\sigma$-compact by
Theorem~\ref{l2}.
\end{proof}

It follows from \cite{A3} that a remainder in some compactification
of a topological group is metacompact iff it is Lindel\"{o}f iff it
is realcompact. Therefore, we have the following question.

\begin{question}\label{q4}
Assume that $G$ is a non-locally compact paratopological group, and
put $Y=bG\setminus G$. Are the following conditions equivalent?
\begin{enumerate}
\item $Y$ is metacompact;

\item $Y$ is Lindel\"{o}f;

\item $Y$ is realcompact.
 \end{enumerate}
\end{question}

A space $X$ is called {\it metacompact} if each open covering of $X$
can be refined by a point-finite open covering. A space $X$ is
called {\it ccc} if every disjoint family of open subsets of $X$ is
countable.

\begin{lemma}\cite{BD}\label{l5}
Every point-finite open collection in a ccc Baire space is countable.
\end{lemma}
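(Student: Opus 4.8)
The plan is to exploit the Baire property to reduce, on a dense open set, to a family of bounded \emph{order} (where the order of $\mathcal{U}$ at a point $x$ is the number of members of $\mathcal{U}$ containing $x$), and then to handle bounded order by induction using $ccc$. So let $X$ be a $ccc$ Baire space and $\mathcal{U}$ a point-finite open collection; we may assume every member of $\mathcal{U}$ is nonempty, since distinct subsets of $X$ include at most one empty set. For $n\in\mathbb{N}$ put $F_{n}=\{x\in X:x\text{ lies in at most }n\text{ members of }\mathcal{U}\}$. The set of points lying in at least $n+1$ members is the open set $\bigcup\{U_{1}\cap\cdots\cap U_{n+1}:U_{i}\in\mathcal{U}\text{ pairwise distinct}\}$, so each $F_{n}$ is closed, and point-finiteness gives $X=\bigcup_{n}F_{n}$. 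Since $X$ is Baire and this is a countable closed cover, $V:=\bigcup_{n}\operatorname{int}(F_{n})$ is a dense open subset of $X$.

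Because $V$ is dense, every (nonempty) $U\in\mathcal{U}$ meets some $\operatorname{int}(F_{n})$, so $\mathcal{U}=\bigcup_{n}\mathcal{U}_{n}$ with $\mathcal{U}_{n}=\{U\in\mathcal{U}:U\cap\operatorname{int}(F_{n})\neq\emptyset\}$; thus it suffices to show each $\mathcal{U}_{n}$ is countable. Fix $n$ and set $W=\operatorname{int}(F_{n})$, an open subspace of $X$ — hence $ccc$ — on which $\mathcal{U}$ has order at most $n$. The heart of the argument is the following claim: if $Z$ is $ccc$ and $\mathcal{V}$ is an open family on $Z$ of order at most $m$, then $\mathcal{V}$ is countable.

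I would prove this claim by induction on $m$. For $m=1$ the (nonempty) members of $\mathcal{V}$ are pairwise disjoint, so $ccc$ finishes it. For the step $m\to m+1$, use Zorn's lemma to pick a maximal pairwise-disjoint subfamily $\mathcal{D}\subseteq\mathcal{V}$; it is countable by $ccc$, and by maximality every $V\in\mathcal{V}$ meets some $D\in\mathcal{D}$, so $\mathcal{V}=\bigcup_{D\in\mathcal{D}}\{V\in\mathcal{V}:V\cap D\neq\emptyset\}$. Fix $D\in\mathcal{D}$ and look at $\{V\in\mathcal{V}:V\neq D,\ V\cap D\neq\emptyset\}$: the traces $V\cap D$ form an open family on the $ccc$ space $D$ of order at most $m$, since a point of $D$ already lies in the member $D$, so fewer than $m+1$ further members can contain it; by the inductive hypothesis this trace family is countable. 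Moreover the map $V\mapsto V\cap D$ on this subfamily is at most $m$-to-one: if $V_{1},\dots,V_{k}$ were distinct with the same nonempty trace $A$, a point of $A$ would lie in the $k+1$ distinct members $V_{1},\dots,V_{k},D$, forcing $k\le m$. Hence $\{V\in\mathcal{V}:V\neq D,\ V\cap D\neq\emptyset\}$, and so each set $\{V\in\mathcal{V}:V\cap D\neq\emptyset\}$, is countable, and the claim follows.

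Applying the claim to the trace family $\{U\cap W:U\in\mathcal{U}_{n}\}$ on the $ccc$ space $W$ (its order is at most $n$ since $W\subseteq F_{n}$) shows this family is countable, and the same at-most-$n$-to-one argument as above shows $\mathcal{U}_{n}$ is countable; summing over $n$ shows $\mathcal{U}$ is countable. I expect the main obstacle to be this bookkeeping rather than any deep idea: restricting $\mathcal{U}$ to the small sets $D$ (resp. $W$) drops the order but destroys injectivity, and one must verify that the restriction map is finite-to-one so that countability of the trace family transfers back to the original family. A secondary point, used freely throughout, is that $ccc$ is inherited by open subspaces and that a Baire space has its closed covers' interiors dense (equivalently, open subspaces of Baire spaces are Baire), which is what legitimizes both the reduction to $V$ and the repeated passage to the subspaces $D$ and $W$.
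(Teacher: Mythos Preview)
Your argument is correct. Note, however, that the paper does not supply its own proof of this lemma: it is quoted as a known result from Burke's chapter in the \emph{Handbook of Set-Theoretic Topology}, so there is nothing in the paper to compare against. What you have written is essentially the classical proof---use the Baire property on the closed cover $F_n=\{x:\operatorname{ord}(x,\mathcal{U})\le n\}$ to reduce to families of bounded order on a dense open set, and then induct on the order using $ccc$, peeling off a maximal disjoint subfamily at each step. The bookkeeping you flag (that passing to traces on $D$ or $W$ is finite-to-one, not injective) is handled correctly, and the auxiliary facts you invoke (open subspaces of $ccc$ spaces are $ccc$; open subspaces of Baire spaces are Baire, so the interiors of a countable closed cover have dense union) are standard and valid.
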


The next corollary gives a partial answer for the Question~\ref{q4}.

\begin{corollary}
Assume that $G$ is a non-locally compact paratopological group, and
put $Y=bG\setminus G$. If $Y$ is metacompact and ccc, then $Y$ is
Lindel\"{o}f.
\end{corollary}

\begin{proof}
By Theorem~\ref{t0}, $Y$ has the Baire property, or is meager and
Lindel\"{o}f. Suppose that $Y$ has the Baire property. Then $Y$ is
Lindel\"{o}f by Lemma~\ref{l5}. Hence $Y$ is Lindel\"{o}f.
\end{proof}

Now, we shall give a negative answer for Question~\ref{q0} by
Example~\ref{e0}.

\begin{example}\label{e0}
There exists a paratopological group $X$ such that some
compactification $bX$ of $X$ has a remainder which is neither
pseudocompact nor meager.
\end{example}

\begin{proof}
Let $Z=X\cup Y$ be the two-arrows space of P. S. Alexandroff and P.
S. Urysohn \cite[Exercise 3.10. C]{E}, where $X=\{(x, 0): 0<x\leq
1\}$ and $Y=\{(x, 1): 0\leq x< 1\}$. The space $X$ is the arrow
space which is homeomorphic to the Sorgenfrey line, see
\cite[Example 1. 2. 2]{E}. $Z$ is a Hausdorff compactification of
Sorgenfrey line $X$, and its remainder $Y$ is still a copy of
Sorgenfrey line. Moreover, there exists a natural structure of an
Abelian group on $Y$ such that the multiplication $(u, v)\mapsto
u\cdot v$ is continuous, that is, the space $Y$ admits a structure
of a paratopological group. For example, if $u=(x, 1)$ and $v=(y,
1)$ are two points in $Y$, then $u\cdot v=(x+y, 1)$ if $x+y<1$, and
$u\cdot v=(x+y-1, 1)$ if $x+y\geq 1$. However, Sorgenfrey line is
non-pseudocompact; otherwise, Sorgenfrey line is a compact space
since it is a Lindel\"{o}f space, which is a contradiction.
Moreover, since $X$ has the Baire property \cite{AI}, $X$ is non-meager.
Therefore, $Y$ is neither pseudocompact nor meager.
\end{proof}

{\bf Remark}\, It follows from Example~\ref{e0} that, in
Question~\ref{q0}, the answer is also negative if we replace the
``homogeneous space'' by ``paratopological group''.

\bigskip

\end{document}